\DeclareSymbolFont{bbold}{U}{bbold}{m}{n}
\DeclareSymbolFontAlphabet{\mathbbold}{bbold}
\newcommand{\N}{\mathbb{N}}
\newcommand{\R}{\mathbb{R}}
\renewcommand{\epsilon}{\varepsilon}
\renewcommand{\Re}{\operatorname{Re}}
\let\phi\varphi
\let\leq\leqslant
\let\geq\geqslant
\def\@row#1,{#1\@ifnextchar;{\@gobble}{&\@row}}
\def\@matrix{%
    \expandafter\@row\my@arg,;%
    \@ifnextchar({\\ \get@in@paren{\@matrix}}{\after@matrix}%
    }
\def\matrixtype#1#2#3{%
    \ifmmode\def\after@matrix{\end{#2}\right#3}%
    \else\def\after@matrix{\end{#2}\right#3$}$\fi
    \left#1\begin{#2}\get@in@paren{\@matrix}%
    }
\def\@column#1,{#1\@ifnextchar;{\@gobble}{\\ \@column}}
\newcommand\vect{}
\def\svect(#1){\left(\begin{smallmatrix}\@column#1,;\end{smallmatrix}\right)}
\def\vect{\get@in@paren{\@vect}}
\def\@vect{\left(\begin{matrix}\expandafter\@column\my@arg,;\end{matrix}\right)}
\def\get@in@paren#1({\def\my@arg{}\def\my@rest{}\def\after@get{#1}\get@arg}
\let\e@a\expandafter
\def\get@arg#1){\e@a\kl@test\my@rest#1(;}
\def\kl@test#1(#2;{\e@a\def\e@a\my@arg\e@a{\my@arg#1}%
                   \ifx:#2:\let\my@exec\after@get
                   \else\let\my@exec\get@arg
                        \e@a\def\e@a\my@arg\e@a{\my@arg(}%
                        \def@rest#2;%
                   \fi\my@exec}
\def\def@rest#1(;{\def\my@rest{#1\kl@zu}}
\def\kl@zu{)}
\newcommand\MyPairedDelimiter{%
  \@ifstar{\My@Paired@Delimiter{{}}}
          {\My@Paired@Delimiter{}}%
}
\newcommand\My@Paired@Delimiter[4]{%
  \newcommand#2{%
    \@ifstar{\start@PD{#1}{\delimitershortfall=-1sp}{#3}{#4}}
            {\start@PD{#1}{}{#3}{#4}}%
  }%
}
\newcommand\start@PD[5]{%
  #1\mathopen{\mathpalette\put@delim@helper{\put@delim{#2}{#3}{.}{#5}}}%
  #5%
  \mathclose{\mathpalette\put@delim@helper{\put@delim{#2}{.}{#4}{#5}}}%
}
\newcommand\put@delim@helper[2]{%
  \hbox{$\m@th\nulldelimiterspace=0pt #2#1$}%
}
\newcommand\put@delim[5]{%
  \setbox\z@\hbox{$\m@th#5{#4}$}%
  \setbox\tw@\null
  \ht\tw@\ht\z@ \dp\tw@\dp\z@
  #1#5%
  \left#2\box\tw@\right#3%
}
\MyPairedDelimiter*{\abs}{\lvert}{\rvert}
\MyPairedDelimiter*{\norm}{\lVert}{\rVert}
\MyPairedDelimiter{\set}{\{}{\}}
\theoremstyle{plain} 
\newtheorem{theorem}{Theorem}[section]
\theoremstyle{definition}
\newtheorem*{definition}{Definition}
\newtheorem{remark}[theorem]{Remark}
\begin{document}

\medmuskip=4mu plus 2mu minus 3mu
\thickmuskip=5mu plus 3mu minus 1mu
\belowdisplayshortskip=9pt plus 3pt minus 5pt

\title{Stabilization via Homogenization}

\author{Marcus Waurick}

\date{}

\maketitle

\begin{abstract}
 In this short note we treat a 1+1-dimensional system of changing type. On different spatial domains the system is of hyperbolic and elliptic type, that is, formally, $\partial_t^2 u_n-\partial_x^2 u_n = \partial_t f$ and $u_n-\partial_x^2 u_n= f$ on the respective spatial domains $\bigcup_{j\in \{1,\ldots,n\}} \big(\frac{j-1}{n},\frac{2j-1}{2n}\big)$ and $\bigcup_{j\in \{1,\ldots,n\}} \big(\frac{2j-1}{2n},\frac{j}{n}\big)$. We show that $(u_n)_n$ converges weakly to $u$, which solves the exponentially stable limit equation $\partial_t^2 u +2\partial_t u + u -4\partial_x^2 u = 2(f+\partial_t f)$ on $[0,1]$. If the elliptic equation is replaced by a parabolic one, the limit equation is \emph{not} exponentially stable.
\end{abstract}

Keywords: evolutionary equations, equations of mixed type, homogenization, exponential stability

MSC 2010: 35M10, 35B35, 35B27

\section*{Acknowledgements}

This work was carried out with financial support of the EPSRC grant EP/L018802/2: “Math-
ematical foundations of metamaterials: homogenisation, dissipation and operator theory”.
This is gratefully acknowledged. The author is indebted to the referee for useful comments.

\section{Introduction}

For $n\in \mathbb{N}$ and a given smooth $f$, we consider the following equation of mixed type:
\[
   \begin{cases}
        \partial_t^2 u_n(t,x) - \partial_x^2 u_n(t,x) = \partial_t f(t,x),&  x\in \bigcup_{j\in \{1,\ldots,n\}} \big(\frac{j-1}{n},\frac{2j-1}{2n}\big),\\
        u_n(t,x) - \partial_x^2 u_n(t,x) = f(t,x),& x\in \bigcup_{j\in \{1,\ldots,n\}} \big(\frac{2j-1}{2n},\frac{j}{n}\big),\\
        (\partial_x u_n)(t,0)=(\partial_x u_n)(t,1)=0, & 
   \end{cases}
   \quad(t\in \mathbb{R}),
\] subject to zero initial conditions and conditions of continuity at the junction points $\{(2j-1)/2n; j\in \{1,\ldots,n-1\}\}$ for $u_n$. We show that for $n\to\infty$ the sequence of solutions $(u_n)_{n\in\N}$ converges weakly in $L^2_{\textnormal{loc}}(\mathbb{R}\times [0,1])$ to $u$, which solves
\begin{equation}\label{e:hh}
   \frac{1}{2}\partial_t^2 u(t,x) + \partial_t u +  \frac{1}{2}u(t,x) - 2\partial_x^2 u(t,x)= f(t,x)+\partial_t f(t,x), \quad((t,x)\in\mathbb{R}\times (0,1))
\end{equation}
subject to $\partial_xu(t,0)=\partial_xu(t,1)=0$ for $t\in \mathbb{R}$ and zero initial conditions. Moreover, we show that the asymptotic limit admits exponentially stable solutions. Note that the stability result for the limit equation is due to the superposed effect of the hyperbolic type and the elliptic type equation: Indeed, it is remarkable that $(\partial_t^2-\partial_x^2)u=\partial_t f$ is \emph{not} exponentially stable, if considered on the whole of $[0,1]$ as underlying spatial domain. Moreover, we will show that if we replace the elliptic part, $u_n(t,x) - \partial_x^2 u_n(t,x) = f(t,x)$, by a corresponding parabolic one, that is, $\partial_tu_n(t,x) - \partial_x^2 u_n(t,x) = f(t,x)$ the limit equation reads
\begin{equation}\label{e:hp}
   \partial_t^2 u(t,x) + \partial_t u - 2\partial_x^2 u(t,x)= f(t,x)+\partial_t f(t,x), \quad((t,x)\in\mathbb{R}\times (0,1))
\end{equation}
subject to homogeneous Neumann boundary conditions. Moreover, we find that the limit equation is \emph{not} exponentially stable (in the sense of \cite[Definition 3.1]{Trostorff2013a}, see also \cite[Section 3.1]{Trostorff2015a}).

For the proof of the homogenization (i.e.~the computation of the limit equation) and stability results, we will employ the notion of evolutionary equations developed in \cite{PicPhy,Picard}. We will use results on exponential stability of \cite{Trostorff2013a} (with an improvement in \cite{Trostorff2014PAMM}) developed in this line of reasoning. The computation of the limit equation is based on \cite{Waurick2014SIAM_HomFrac,Waurick2013AA_Hom}. In the next section, we will recall the notion of evolutionary equations and the results mentioned. The third section establishes the functional analytic framework for the equations to study. Moreover, we provide the proof of the result mentioned concerning the hyperbolic-elliptic system. We address the case where the parabolic equation replaces the elliptic one in the last section.

\section{Evolutionary Equations}

In the whole section, let $\mathcal{H}$ be a Hilbert space. For $\nu\in \mathbb{R}$ we define
\[
   L_\nu^2(\mathbb{R};\mathcal{H})\coloneqq \{ f\colon \mathbb{R}\to \mathcal{H}; f\text{ measurable}, \int_\R \|f(t)\|_{\mathcal{H}}^2e^{-2\nu t} dt <\infty \}
\]
endowed with the obvious norm (and scalar product). We set
\[
   \partial_{t,\nu} \colon D(\partial_{t,\nu})\subseteq L_\nu^2(\mathbb{R};\mathcal{H})\to L_\nu^2(\mathbb{R};\mathcal{H}), f\mapsto f',
\]
where $f'$ denotes the distributional derivative and $D(\partial_{t,\nu})$ is the maximal domain in $L_\nu^2(\mathbb{R};\mathcal{H})$. Note that for all $\nu\neq 0$, we have $\partial_{t,\nu}^{-1}$ is a bounded linear operator in $L_\nu^2(\mathbb{R};\mathcal{H})$, see \cite[Corollary 2.5]{Kalauch}. Note that also $\partial_{t,\nu}^{-1} f = \int_{-\infty}^{(\cdot)} f(\tau) d\tau$ for $f\in L_{\nu}^2(\mathbb{R};\mathcal{H})$ and $\nu>0$.

For a closed, densely defined linear operator $B$ in $\mathcal{H}$, we shall denote the corresponding lifted operator to $L_\nu^2(\mathbb{R};\mathcal{H})$ by the corresponding calligraphic letter, that is,
\[
   \mathcal{B} \colon L_\nu^2(\mathbb{R};D(B))\subseteq L_\nu^2(\mathbb{R};\mathcal{H})\to L_\nu^2(\mathbb{R};\mathcal{H}), f\mapsto (t\mapsto Bf(t)).
\]
The exponentially weighted $L^2$-type spaces have been used to obtain a solution theory for abstract operator equations in space time. $L(\mathcal{H})$ denotes the space of bounded linear operators in $\mathcal{H}$.
\begin{theorem}[{{\cite[Solution Theory]{PicPhy}, \cite[Theorem 6.2.5]{Picard}}}]\label{t:st} Let $A$ be a skew-selfadjoint operator in $\mathcal{H}$, $0\leq M=M^*,N\in L(\mathcal{H})$. Assume there exists $c,\nu>0$ such that for all $\mu\geq\nu$, we have
\begin{equation}\label{e:st}
   \mu\langle  M \phi,\phi\rangle + \Re \langle N \phi,\phi \rangle \geq c\langle \phi,\phi\rangle\quad  (\phi\in \mathcal{H}).
\end{equation}
Then the operator $\mathcal{B}_\mu \coloneqq \partial_{t,\mu} \mathcal{M}+\mathcal{N}+\mathcal{A}$ with $D(\mathcal{B}_\mu)=D(\partial_{t,\mu})\cap D(\mathcal{A})$ is closable in $L_{\mu}^2(\mathbb{R};\mathcal{H})$. Moreover, $\mathcal{S}_\mu \coloneqq \overline{\mathcal{B}}_\mu^{-1}$ is well-defined, continuous and bounded with $\|\mathcal{S}_\mu\|_{L(L_\mu^2)}\leq 1/c$.  
\end{theorem}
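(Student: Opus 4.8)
\section*{Proof proposal}

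The plan is to diagonalize the time derivative by the Fourier--Laplace transform and thereby reduce the space-time problem to a uniform, fiberwise invertibility estimate on $\mathcal{H}$. First I would recall the Fourier--Laplace transform $\mathcal{L}_\mu\colon L_\mu^2(\mathbb{R};\mathcal{H})\to L^2(\mathbb{R};\mathcal{H})$, given on a dense set by $(\mathcal{L}_\mu g)(\xi)=\tfrac{1}{\sqrt{2\pi}}\int_\mathbb{R} e^{-(\mathrm{i}\xi+\mu)t}g(t)\,dt$, which is unitary and intertwines $\partial_{t,\mu}$ with multiplication by $\xi\mapsto\mathrm{i}\xi+\mu$. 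Since $M$, $N$, $A$ act pointwise in time (being lifts of operators on $\mathcal{H}$), this transform turns $\mathcal{B}_\mu$ into the fibered operator whose action at frequency $\xi$ is $B_\mu(\xi)\coloneqq(\mathrm{i}\xi+\mu)M+N+A$ on $\mathcal{H}$, with domain $D(A)$. The image of the prescribed core $D(\partial_{t,\mu})\cap D(\mathcal{A})$ under $\mathcal{L}_\mu$ is a core for this fibered operator, so it suffices to treat each $B_\mu(\xi)$ separately and then reassemble.

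Next I would establish the key fiberwise estimate. For $\phi\in D(A)$, since $M=M^*\ge 0$ makes $\langle M\phi,\phi\rangle$ real (so $\Re\langle\mathrm{i}\xi M\phi,\phi\rangle=0$) and $A$ skew-selfadjoint gives $\Re\langle A\phi,\phi\rangle=0$, one obtains $\Re\langle B_\mu(\xi)\phi,\phi\rangle=\mu\langle M\phi,\phi\rangle+\Re\langle N\phi,\phi\rangle\ge c\|\phi\|^2$ by hypothesis \eqref{e:st} (using $\mu\ge\nu$). The Cauchy--Schwarz inequality then yields $\|B_\mu(\xi)\phi\|\ge c\|\phi\|$, so $B_\mu(\xi)$ is injective with closed range. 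The adjoint is $B_\mu(\xi)^*=(-\mathrm{i}\xi+\mu)M+N^*-A$, and the identical computation, noting $\Re\langle N^*\psi,\psi\rangle=\Re\langle N\psi,\psi\rangle$, gives $\Re\langle B_\mu(\xi)^*\psi,\psi\rangle\ge c\|\psi\|^2$; hence $B_\mu(\xi)^*$ is injective as well, so $B_\mu(\xi)$ has dense range. Therefore $B_\mu(\xi)$ is boundedly invertible with $\|B_\mu(\xi)^{-1}\|\le 1/c$, and this bound is uniform in $\xi\in\mathbb{R}$.

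Finally I would reassemble the fibers. The resolvent identity $B_\mu(\xi)^{-1}-B_\mu(\xi')^{-1}=\mathrm{i}(\xi'-\xi)\,B_\mu(\xi)^{-1}M\,B_\mu(\xi')^{-1}$ shows $\xi\mapsto B_\mu(\xi)^{-1}\in L(\mathcal{H})$ is (locally Lipschitz) continuous and uniformly bounded by $1/c$; it thus defines a bounded decomposable multiplication operator of norm $\le 1/c$ on $L^2(\mathbb{R};\mathcal{H})$, which is the two-sided inverse of the closure of the fibered operator. Conjugating back by $\mathcal{L}_\mu$ shows that $\mathcal{B}_\mu$ is closable, that $\overline{\mathcal{B}}_\mu$ is boundedly invertible, and that $\|\mathcal{S}_\mu\|_{L(L_\mu^2)}=\|\overline{\mathcal{B}}_\mu^{-1}\|\le 1/c$.

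The step I expect to require the most care is the identification of the closure: one must verify that the prescribed core genuinely maps to a core of the fibered operator, so that $\overline{\mathcal{B}}_\mu$ equals the full decomposable operator rather than a proper restriction. This is the point where closability is actually needed, since $\mathcal{M}$ may fail to be invertible and $\partial_{t,\mu}\mathcal{M}$ is therefore not closed on the nose; it is precisely the uniform fiberwise lower bound, together with the matching bound on the adjoint fibers, that guarantees the assembled inverse is two-sided and hence certifies both closability and the reciprocal norm estimate.
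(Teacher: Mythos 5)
The paper does not prove this theorem at all: it is quoted from the literature (\cite[Solution Theory]{PicPhy}, \cite[Theorem 6.2.5]{Picard}), so there is no in-paper proof to compare against; the comparison has to be with the standard argument in those references. Measured against that, your proposal is correct and uses essentially the same ingredients: the unitary Fourier--Laplace transform turning $\partial_{t,\mu}$ into multiplication by $\mathrm{i}\xi+\mu$, the numerical-range computation $\Re\langle B_\mu(\xi)\phi,\phi\rangle=\mu\langle M\phi,\phi\rangle+\Re\langle N\phi,\phi\rangle\ge c\|\phi\|^2$ (using that $\langle M\phi,\phi\rangle$ is real and that $A$ is skew-selfadjoint), and the matching estimate for the adjoint fibers. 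Where you deviate is the finish: the cited proof stays in space--time and invokes the abstract fact that a densely defined operator which, together with its adjoint, is strictly accretive is closable with boundedly invertible closure of inverse norm at most $1/c$, whereas you invert fiber by fiber and assemble a decomposable inverse. Your variant buys explicitness (including the Lipschitz identity giving measurability and continuity of $\xi\mapsto B_\mu(\xi)^{-1}$), at the cost of the core-identification step you flag at the end. That step is genuine but routine: for any $\hat u$ in the maximal decomposable domain, the frequency truncations $\chi_{[-n,n]}\hat u$ lie in the image of $D(\partial_{t,\mu})\cap D(\mathcal{A})$ under the transform (boundedness of the frequency support makes $(\mathrm{i}\xi+\mu)\hat u$ and $A\hat u$ separately square integrable) and converge in graph norm, so the closure of the transformed operator is exactly the inverse of the assembled bounded operator; alternatively, injectivity of the adjoint fibers yields dense range directly. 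With that lemma written out, your argument is complete and gives $\|\mathcal{S}_\mu\|_{L(L_\mu^2)}\le 1/c$ as claimed.
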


\begin{remark}\label{r:in} In the situation of Theorem \ref{t:st}, assume there is $\eta\in \R$ with the property that $\overline{\mathcal{B}}_{\mu}^{-1}|_{C_c^\infty(\mathbb{R};\mathcal{H})}$ extends to a bounded linear operator $\mathcal{S}_\zeta\in L(L_\zeta^2(\mathbb{R};\mathcal{H}))$ for all $\zeta\geq\eta$. Then, by \cite[Theorem 6.1.4]{Picard} or \cite[Lemma 3.6]{Trostorff2013a}, for all $\zeta,\xi\geq \eta$ we have that $\mathcal{S}_{\xi} = \mathcal{S}_\zeta$ on $L_{\zeta}^2(\mathbb{R};\mathcal{H})\cap L_{\xi}^2(\mathbb{R};\mathcal{H})$. 
\end{remark}

\begin{remark}\label{r:reg} A consequence of Theorem \ref{t:st} is that $D(\mathcal{B}_\mu)=D(\partial_{t,\mu})\cap D(\mathcal{A})$ is an operator core for $\overline{\mathcal{B}}_\mu$. In particular, for $f\in L_\mu^2(\mathbb{R};\mathcal{H})$, there exists $(u_n)_n$ in $D(\mathcal{B}_\mu)$ converging in the graph norm of $\overline{\mathcal{B}}_\mu$ to $u$. Hence, 
\begin{align*}
   \partial_{t,\mu}^{-1} f & = \partial_{t,\mu}^{-1} \overline{\mathcal{B}}_\mu u  
    \\ & = \lim_{n\to\infty} \partial_{t,\mu}^{-1} \overline{\mathcal{B}}_\mu u_n 
    \\ & = \lim_{n\to\infty} \partial_{t,\mu}^{-1} (\partial_{t,\mu} \mathcal{M}+\mathcal{N} + \mathcal{A}) u_n
    \\ & = \lim_{n\to\infty}  (\partial_{t,\mu} \mathcal{M}+\mathcal{N} + \mathcal{A}) \partial_{t,\mu}^{-1} u_n,
\end{align*}
where we used Hille's Theorem to deduce that $\partial_{t,\mu}^{-1} \mathcal{A} \subseteq \mathcal{A}\partial_{t,\mu}^{-1}$. Further, we realize that the limit 
\[
   \lim_{n\to\infty}  (\partial_{t,\mu} \mathcal{M}+\mathcal{N}) \partial_{t,\mu}^{-1} u_n
\]
exists and equals $(\partial_{t,\mu} \mathcal{M}+\mathcal{N}) \partial_{t,\mu}^{-1} u$. Thus, by the closedness of $\mathcal{A}$ and continuity of $\partial_{t,\mu}^{-1}$, we infer $\lim_{n\to\infty} \partial_{t,\mu}^{-1}u_n=\partial_{t,\mu}^{-1}u\in D(\mathcal{A})$. Hence, $\partial_{t,\mu}^{-1} u\in D(\mathcal{B}_\mu)$ and
\[
  \partial_{t,\mu}^{-1}f = \partial_{t,\mu} \mathcal{M}\partial_{t,\mu}^{-1} u+\mathcal{N}\partial_{t,\mu}^{-1} u + \mathcal{A}\partial_{t,\mu}^{-1} u.
\]
Therefore, since $\partial_{t,\mu}^{-1}$ maps onto $D(\partial_{t,\mu})$, we get $\overline{\mathcal{B}}_\mu^{-1}[D(\partial_{t,\mu})]\subseteq D(\partial_{t,\mu})\cap D(\mathcal{A}).$
\end{remark}

Next, we define exponential stability in the present context.

\begin{definition}[{{\cite[Definition 3.1]{Trostorff2013a}}}] Let $A$ be a skew-selfadjoint operator in $\mathcal{H}$, $0\leq M=M^*,N\in L(\mathcal{H})$ satisfying \eqref{e:st}. Then $\mathcal{S}_\mu$ from Theorem \ref{t:st} is called \emph{exponentially stable}, if there exists $\eta>0$ such that 
\[
   \mathcal{S}_\mu f \in \bigcap_{-\eta < \zeta \leq \mu} L_{\zeta}^2(\mathbb{R};\mathcal{H})\quad\Big(f\in L_{-\eta}^2(\mathbb{R};\mathcal{H})\cap L_{\mu}^2(\mathbb{R};\mathcal{H})\Big).
\] 
\end{definition}

\begin{remark}
 We refer to \cite[Initial value problems]{Trostorff2013a}, for a relationship of the latter definition to the more commonly known notion of exponential stability for initial value problems. 
\end{remark}

We recall a criterion for exponential stability particularly interesting for the present situation. (Note that the strict positiveness of $M_0$ used in \cite[Theorem 4.1]{Trostorff2013a} is not needed.)

\begin{theorem}[{{\cite[Theorem 4.1]{Trostorff2013a}}}]\label{t:es} Let $A$ be skew-selfadjoint in $\mathcal{H}$, $0\leq M=M^*,N\in L(\mathcal{H})$. In addition to \eqref{e:st}, assume there exists $c'>0$ such that $2\Re N=N+N^*\geq c'$ in the sense of positive definiteness. Then $\mathcal{S}_\mu$ from Theorem \ref{t:st} is exponentially stable.
\end{theorem}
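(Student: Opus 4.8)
The plan is to prove that $\mathcal{S}_\mu$ is exponentially stable by showing that the positivity condition \eqref{e:st} actually persists for *negative* $\mu$ down to some $-\eta < 0$. The key observation is that condition \eqref{e:st} is a statement about the quadratic form $\mu\langle M\phi,\phi\rangle + \Re\langle N\phi,\phi\rangle$, and the additional hypothesis $2\Re N \geq c'$ gives us uniform positivity of the $N$-part alone, independent of any contribution from $M$.

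First I would exploit the strict positivity of $\Re N$. Since $M \geq 0$, for any $\mu \geq 0$ the term $\mu\langle M\phi,\phi\rangle$ is nonnegative, so \eqref{e:st} holds with $c = c'/2$ (taking $\phi$ into account via $\Re\langle N\phi,\phi\rangle = \tfrac12\langle(N+N^*)\phi,\phi\rangle \geq \tfrac{c'}{2}\langle\phi,\phi\rangle$). The crucial point is what happens for $\mu < 0$: writing $\mu = -\eta$ with $\eta > 0$ small, I need
\[
   -\eta\langle M\phi,\phi\rangle + \tfrac{c'}{2}\langle\phi,\phi\rangle \geq c\langle\phi,\phi\rangle.
\]
Because $M = M^*$ is bounded, $\langle M\phi,\phi\rangle \leq \|M\|_{L(\mathcal{H})}\langle\phi,\phi\rangle$, so the left-hand side is bounded below by $\bigl(\tfrac{c'}{2} - \eta\|M\|\bigr)\langle\phi,\phi\rangle$. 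Choosing $\eta$ small enough that $\eta\|M\| < c'/2$, say $\eta = c'/(4\|M\|)$ (with the convention $\eta$ arbitrary if $M=0$), yields \eqref{e:st} uniformly for all $\mu \geq -\eta$ with a new constant $c$, e.g.\ $c = c'/4$.

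Next I would apply Theorem \ref{t:st} on the weighted space $L_{-\eta}^2$: the verification above shows the solution theory applies for every $\mu \geq -\eta$, so $\overline{\mathcal{B}}_\mu^{-1}$ is well-defined and bounded by $1/c$ on $L_\mu^2(\mathbb{R};\mathcal{H})$ for each such $\mu$. It then remains to invoke the consistency statement of Remark \ref{r:in}: since the solution operators for different weights agree on the intersection of the respective $L^2$-spaces (the hypothesis of Remark \ref{r:in} holds with this $\eta$, as $\overline{\mathcal{B}}_\mu^{-1}$ restricted to $C_c^\infty$ extends boundedly to each $L_\zeta^2$ with $\zeta \geq -\eta$), we obtain that for $f \in L_{-\eta}^2(\mathbb{R};\mathcal{H}) \cap L_\mu^2(\mathbb{R};\mathcal{H})$ the element $\mathcal{S}_\mu f$ coincides with $\mathcal{S}_\zeta f$ for every $-\eta < \zeta \leq \mu$, hence lies in each $L_\zeta^2$. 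This is precisely the definition of exponential stability.

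The main obstacle I anticipate is not the positivity estimate, which is elementary, but the careful bookkeeping of \emph{which} operator $\mathcal{S}_\zeta f$ denotes and why the different solution operators are compatible. One must be sure that the single function $\mathcal{S}_\mu f$ — defined a priori only in $L_\mu^2$ — genuinely belongs to the smaller spaces $L_\zeta^2$ for $\zeta \leq \mu$, rather than merely that \emph{some} solution exists there. This is exactly the content packaged into Remark \ref{r:in}, so the proof reduces to checking that its hypothesis is met, namely that the $C_c^\infty$-restriction extends boundedly for the full range $\zeta \geq -\eta$; this follows because the estimate \eqref{e:st} has been secured uniformly down to $-\eta$, giving the uniform bound $\|\mathcal{S}_\zeta\| \leq 1/c$ that furnishes the required extension.
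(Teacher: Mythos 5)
The paper itself contains no proof of this statement: Theorem \ref{t:es} is imported from \cite[Theorem 4.1]{Trostorff2013a}, with only the parenthetical remark that the strict positivity of $M$ assumed there is not needed. Measured against that source, your argument is essentially the standard (and correct) proof: extend \eqref{e:st} to all weights $\mu\geq-\eta$ for a suitable $\eta>0$ using $2\Re N\geq c'$ and the boundedness of $M\geq 0$, obtain solution operators at these (possibly negative) weights, and conclude by consistency of the solution operators across weights. Your choice $\eta=c'/(4\|M\|_{L(\mathcal{H})})$ and the resulting constant $c=c'/4$ are fine, and since you only use $0\leq M=M^*$ bounded, your computation also substantiates the paper's parenthetical claim.

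Two scope caveats should be made explicit, both fixable. First, you ``apply Theorem \ref{t:st} on the weighted space $L_{-\eta}^2$,'' but Theorem \ref{t:st} as stated presupposes $c,\nu>0$ and yields conclusions only for weights $\mu\geq\nu>0$; it does not literally provide bounded invertibility of the closure of $\partial_{t,\zeta}\mathcal{M}+\mathcal{N}+\mathcal{A}$ in $L_\zeta^2(\mathbb{R};\mathcal{H})$ for $\zeta\in[-\eta,0]$. One must instead note that the proof of Theorem \ref{t:st} rests on the accretivity estimate $\Re\langle(\partial_{t,\zeta}\mathcal{M}+\mathcal{N}+\mathcal{A})\phi,\phi\rangle\geq c\langle\phi,\phi\rangle$ (and its analogue for the adjoint), and that the identity $\Re\langle\partial_{t,\zeta}\mathcal{M}\phi,\phi\rangle_{L_\zeta^2}=\zeta\langle\mathcal{M}\phi,\phi\rangle_{L_\zeta^2}$ holds for \emph{every} real $\zeta$, so the solution theory extends verbatim to all weights at which your extended estimate holds; this is precisely how \cite{Trostorff2013a} argues. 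Second, the uniform bound $\|\mathcal{S}_\zeta\|\leq 1/c$ does not by itself ``furnish'' the hypothesis of Remark \ref{r:in}: that hypothesis concerns the one fixed operator $\overline{\mathcal{B}}_\mu^{-1}|_{C_c^\infty(\mathbb{R};\mathcal{H})}$, so you must also identify, for $f\in C_c^\infty(\mathbb{R};\mathcal{H})$, the solution computed in $L_\zeta^2$ with the solution $\overline{\mathcal{B}}_\mu^{-1}f$ computed in $L_\mu^2$ --- a priori these are inverses of different closures in different spaces, and nothing in the norm bound alone makes them agree. This identification is exactly what the results cited inside Remark \ref{r:in} (\cite[Theorem 6.1.4]{Picard}, \cite[Lemma 3.6]{Trostorff2013a}) provide once the solution operators exist for all $\zeta\geq-\eta$; invoking them at that point, rather than the uniform bound, is what closes the argument.
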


Next, we recall a result related to homogenization of the equations under consideration. The weak operator topology will be denoted by $\tau_{w}$. 

\begin{theorem}[{{\cite[Theorem 3.5]{Waurick2013AA_Hom} or \cite[Theorem 4.1]{Waurick2014SIAM_HomFrac}}}]\label{t:ht} Let $A=-A^*$ in $\mathcal{H}$ such that $D(A)$ (endowed with the graph norm) is compactly embedded into $\mathcal{H}$. Let $(M_n)_{n\in\mathbb{N}}$, $(N_n)_{n\in \mathbb{N}}$ be sequences in $L(\mathcal{H})$ with $0\leq M_n=M_n^*$, $n\in \mathbb{N}$. Assume there are $O,P\in L(\mathcal{H})$ such that $M_n\to O$ and $N_n\to P$ as $n\to\infty$ in $\tau_{w}$ of $L(\mathcal{H})$. Further, assume there is $c,\nu>0$ such that \eqref{e:st} holds for $M_n$ and $N_n$ instead of $M$ and $N$, respectively, for any $n\in \mathbb{N}$.

Then 
\[
   \mathcal{S}_{\mu,n}\coloneqq \overline{\partial_{t,\mu}\mathcal{M}_n+\mathcal{N}_n+\mathcal{A}}^{-1} \to  \overline{\partial_{t,\mu}\mathcal{O}+\mathcal{P}+\mathcal{A}}^{-1}
\]
in $\tau_w$ of $L(L_\mu^2(\mathbb{R};\mathcal{H}))$.
\end{theorem}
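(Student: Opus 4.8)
The plan is to exploit the uniform boundedness of the resolvents $\mathcal{S}_{\mu,n}$ guaranteed by Theorem~\ref{t:st}. Since \eqref{e:st} holds uniformly in $n$ with the same constants $c,\nu>0$, we have $\norm{\mathcal{S}_{\mu,n}}_{L(L_\mu^2)}\le 1/c$ for all $n$, and likewise $\norm{\overline{\partial_{t,\mu}\mathcal{O}+\mathcal{P}+\mathcal{A}}^{-1}}\le 1/c$ (the weak operator limits $O,P$ inherit \eqref{e:st}, being weak limits of the $M_n,N_n$). A bounded sequence in $L(L_\mu^2)$ is relatively $\tau_w$-sequentially compact on a separable Hilbert space, so it suffices to identify every $\tau_w$-accumulation point with $\mathcal{S}_\mu\coloneqq\overline{\partial_{t,\mu}\mathcal{O}+\mathcal{P}+\mathcal{A}}^{-1}$; uniqueness of the limit then upgrades convergence of the full sequence.

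First I would pass to a subsequence along which $\mathcal{S}_{\mu,n}\to T$ in $\tau_w$ for some $T\in L(L_\mu^2)$ with $\norm{T}\le 1/c$, and aim to show $T=\mathcal{S}_\mu$. Fix $f\in L_\mu^2(\R;\mathcal{H})$ and set $u_n\coloneqq\mathcal{S}_{\mu,n}f$, so that $u_n\rightharpoonup Tf\eqqcolon u$ weakly. The defining relation $(\partial_{t,\mu}\mathcal{M}_n+\mathcal{N}_n+\mathcal{A})u_n=f$ holds (on the appropriate core, using Remark~\ref{r:reg}), and I would apply the bounded operator $\partial_{t,\mu}^{-1}$ to write this as $\mathcal{M}_n u_n+\partial_{t,\mu}^{-1}\mathcal{N}_n u_n+\partial_{t,\mu}^{-1}\mathcal{A}u_n=\partial_{t,\mu}^{-1}f$. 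The goal is to take $n\to\infty$ and recover $\mathcal{O}u+\partial_{t,\mu}^{-1}\mathcal{P}u+\partial_{t,\mu}^{-1}\mathcal{A}u=\partial_{t,\mu}^{-1}f$, i.e.\ $(\partial_{t,\mu}\mathcal{O}+\mathcal{P}+\mathcal{A})u=f$, which identifies $u=\mathcal{S}_\mu f$ and hence $T=\mathcal{S}_\mu$.

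The main obstacle is that weak convergence $M_n\to O$ in $\tau_w$ of $L(\mathcal{H})$ combined with merely weak convergence $u_n\rightharpoonup u$ does \emph{not} in general pass to the limit in the product $\mathcal{M}_n u_n$: one needs to upgrade the convergence of $u_n$ in one factor to the \emph{strong} (norm) topology, or exploit compactness. This is exactly where the hypothesis that $D(A)\hookrightarrow\mathcal{H}$ compactly enters. The key is to show that $\partial_{t,\mu}^{-1}\mathcal{A}u_n$, equivalently $\mathcal{A}\partial_{t,\mu}^{-1}u_n$, stays bounded in a space with better compactness in the spatial variable, so that the sequence $(\partial_{t,\mu}^{-1}u_n)_n$ lies in a bounded subset of $L_\mu^2(\R;D(A))$; the compact embedding $D(A)\hookrightarrow\mathcal{H}$ then yields strong convergence of $\partial_{t,\mu}^{-1}u_n$ in the spatial variable (via an Aubin--Lions-type argument in the time-weighted setting, the temporal compactness coming from the uniform bound on the time derivative). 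With this local-strong convergence in hand, the troublesome products $\mathcal{M}_n\partial_{t,\mu}^{-1}u_n$ and $\mathcal{N}_n\partial_{t,\mu}^{-1}u_n$ converge to $\mathcal{O}\partial_{t,\mu}^{-1}u$ and $\mathcal{P}\partial_{t,\mu}^{-1}u$ respectively (weak-times-strong passes to the limit), completing the identification.

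Finally I would close the argument by noting that since $u=\mathcal{S}_\mu f$ is independent of the chosen subsequence and $f\in L_\mu^2$ was arbitrary, every $\tau_w$-accumulation point of $(\mathcal{S}_{\mu,n})_n$ equals $\mathcal{S}_\mu$; combined with the uniform bound and sequential $\tau_w$-compactness, this gives $\mathcal{S}_{\mu,n}\to\mathcal{S}_\mu$ in $\tau_w$ of $L(L_\mu^2(\R;\mathcal{H}))$ as claimed. I would remark that the commutation $\partial_{t,\mu}^{-1}\mathcal{A}\subseteq\mathcal{A}\partial_{t,\mu}^{-1}$ used above is justified exactly as in Remark~\ref{r:reg} via Hille's theorem, and that the well-definedness of the limit resolvent $\mathcal{S}_\mu$ is furnished by Theorem~\ref{t:st} applied to $O,P$.
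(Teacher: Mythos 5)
First, note that the paper contains no proof of this theorem: it is imported from \cite[Theorem 3.5]{Waurick2013AA_Hom} and \cite[Theorem 4.1]{Waurick2014SIAM_HomFrac}, so your attempt can only be measured against the strategy of those references. Your architecture is the right one: the uniform bound $\|\mathcal{S}_{\mu,n}\|\leq 1/c$, sequential $\tau_w$-compactness (note that the separability of $\mathcal{H}$ which this requires follows from the compact, dense embedding $D(A)\hookrightarrow\mathcal{H}$), identification of every accumulation point with $\mathcal{S}_\mu$, and the subsequence principle. Your use of the compactness hypothesis is also the correct mechanism: with $v_n\coloneqq\partial_{t,\mu}^{-1}u_n$ one has $\mathcal{A}v_n=\partial_{t,\mu}^{-1}f-\mathcal{M}_nu_n-\mathcal{N}_nv_n$, which is bounded in $L_\mu^2(\mathbb{R};\mathcal{H})$ (Banach--Steinhaus gives $\sup_n\|M_n\|+\|N_n\|<\infty$), so $(v_n)_n$ is bounded in $L_\mu^2(\mathbb{R};D(A))$ and an Aubin--Lions argument yields $v_n\to v$ strongly in $L^2_{\mathrm{loc}}$.

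However, there is a genuine gap exactly at the step you claim ``completes the identification''. The integrated equation reads $\mathcal{M}_nu_n+\mathcal{N}_nv_n+\mathcal{A}v_n=\partial_{t,\mu}^{-1}f$ (the $\mathcal{N}_n$-term commutes with $\partial_{t,\mu}^{-1}$ because $N_n$ is time-independent). The terms $\mathcal{N}_nv_n$ and $\mathcal{A}v_n$ are indeed handled by weak-times-strong convergence and by weak closedness of $\mathcal{A}$, respectively. But the remaining term is $\mathcal{M}_nu_n$, \emph{not} $\mathcal{M}_n\partial_{t,\mu}^{-1}u_n$ as in your final list of ``troublesome products'': applying $\partial_{t,\mu}^{-1}$ to $\partial_{t,\mu}\mathcal{M}_nu_n$ returns $\mathcal{M}_nu_n$, and here $u_n$ converges only weakly, so the strong convergence of $v_n$ says nothing about this product. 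This is precisely the weak-times-weak pairing you yourself flagged as the main obstacle, and as written your argument never resolves it. The missing step is one more integration by parts in time: since $M_n$ is time-independent, $\mathcal{M}_nu_n=\partial_{t,\mu}(\mathcal{M}_nv_n)$, and weak-times-strong gives $\mathcal{M}_nv_n\rightharpoonup\mathcal{O}v$; hence for every $\phi\in C_c^\infty(\mathbb{R};\mathcal{H})$,
\[
  \langle\mathcal{M}_nu_n,\phi\rangle_{L_\mu^2}
  =\langle\mathcal{M}_nv_n,(-\partial_{t,\mu}+2\mu)\phi\rangle_{L_\mu^2}
  \to\langle\mathcal{O}v,(-\partial_{t,\mu}+2\mu)\phi\rangle_{L_\mu^2}
  =\langle\mathcal{O}u,\phi\rangle_{L_\mu^2},
\]
which, together with the boundedness of $(\mathcal{M}_nu_n)_n$, yields $\mathcal{M}_nu_n\rightharpoonup\mathcal{O}u$ in $L_\mu^2(\mathbb{R};\mathcal{H})$. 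With this the limit identity $\mathcal{O}u+\mathcal{P}v+\mathcal{A}v=\partial_{t,\mu}^{-1}f$ follows and your identification $u=\mathcal{S}_\mu f$, hence the subsequence principle, goes through. Two smaller points should also be made explicit: all pairings must be tested against compactly supported functions (dense in $L_\mu^2$), since the Aubin--Lions compactness in the weighted space is only local and the tails are controlled by the uniform bounds; and passing from the integrated identity back to $u=\mathcal{S}_\mu f$ uses that $\mathcal{T}_\mu$ commutes with $\partial_{t,\mu}$, which holds because all coefficients are time-independent.
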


\begin{remark}
   Note that $\overline{\partial_{t,\mu}\mathcal{O}+\mathcal{P}+\mathcal{A}}^{-1}$ is a well-defined continuous linear operator in $L(\mathcal{H})$. Indeed, the condition \eqref{e:st} is stable under limits in the weak operator topology.
\end{remark}

\section{The Hyperbolic-Elliptic System} 

To begin with, we put the equation to study into a functional analytic perspective. Recall that for $n\in \mathbb{N}$ and a given $f$ we want to solve
\begin{equation}\label{e:2t}
   \begin{cases}
        \partial_t^2 u_n(t,x) - \partial_x^2 u_n(t,x) = \partial_t f(t,x),&  x\in \bigcup_{j\in \{1,\ldots,n\}} \big(\frac{(j-1)}{n},\frac{2j-1}{2n}\big)\\
        u_n(t,x) - \partial_x^2 u_n(t,x) = f(t,x),& x\in \bigcup_{j\in \{1,\ldots,n\}} \big(\frac{2j-1}{2n},\frac{j}{n}\big)\\
        \partial_x u_n(t,0)=\partial_x u_n(t,1)=0, & 
   \end{cases}
   \quad(t\in \mathbb{R}),
\end{equation} subject to homogeneous initial conditions and conditions for continuity at the junction points $\{(2j-1)/2n; j\in \{1,\ldots,n\}\}$ for $u_n$. 
On $\mathbb{R}\times \bigcup_{j\in \{1,\ldots,n\}} \big((j-1)/n,(2j-1)/2n\big)$ consider
\begin{equation}\label{e:o}
   \Big(\partial_t \begin{pmatrix}
                1 & 0 \\ 0 & 1 
              \end{pmatrix} - \begin{pmatrix} 0 & \partial_x \\ \partial_x & 0 \end{pmatrix}\Big)\begin{pmatrix} u_n \\ w_n
              \end{pmatrix} = \begin{pmatrix}  f  \\ 0 \end{pmatrix}
\end{equation}
as well as on $\mathbb{R}\times \bigcup_{j\in \{1,\ldots,n\}} \big((2j-1)/2n,j/n\big)$
\begin{equation}\label{e:e}
   \Big( \begin{pmatrix}
                1 & 0 \\ 0 & 1 
              \end{pmatrix} - \begin{pmatrix} 0 & \partial_x \\ \partial_x & 0 \end{pmatrix}\Big)\begin{pmatrix} u_n \\ w_n
              \end{pmatrix} = \begin{pmatrix} f  \\ 0 \end{pmatrix}.
\end{equation}
It is easy to check that formally the solution $u_n$ to both equations \eqref{e:o} and \eqref{e:e} lead to the first two equations of \eqref{e:2t}. Next, we write the two equations \eqref{e:o} and \eqref{e:e} within one single equation
\begin{equation}\label{e:1t}
 \Big(\partial_t \begin{pmatrix}
                1_n & 0 \\ 0 & 1_n 
              \end{pmatrix} + \begin{pmatrix}
                1-1_n & 0 \\ 0 & 1-1_n 
              \end{pmatrix} - \begin{pmatrix} 0 & \partial_x \\ \partial_x & 0 \end{pmatrix}\Big)\begin{pmatrix} u_n \\ w_n
              \end{pmatrix} = \begin{pmatrix}  f  \\ 0 \end{pmatrix},
\end{equation}
where $1_n$ denotes the multiplication operator induced by
\[
   a_n\colon x\mapsto a(nx),\quad \text{where } a\coloneqq \sum_{k\in \mathbb{Z}} \chi_{[k,k+1/2]} \in L^\infty(\mathbb{R}).
\]
($\chi_K$ denotes the characteristic function of a set $K$, that is, $\chi_K(x)=1$ if $x\in K$ and $\chi_K(x)=0$, if $x\notin K$.)
In order to account for the boundary conditions of $\partial_x u_n$ (see the third line in \eqref{e:2t}), we define
\[
   \partial_{x,0} : H_0^1(0,1)\subseteq L^2(0,1)\to L^2(0,1), u\mapsto u'.
\]
There are no boundary conditions for $u_n$. Hence, we let $\partial_x \coloneqq -\partial_{x,0}^*$ and the equation \eqref{e:1t} thus reads
   \begin{equation}\label{e:1tbc}
 \Big(\partial_t \begin{pmatrix}
                1_n & 0 \\ 0 & 1_n 
              \end{pmatrix} + \begin{pmatrix}
                1-1_n & 0 \\ 0 & 1-1_n 
              \end{pmatrix} - \begin{pmatrix} 0 & \partial_{x,0} \\ \partial_x  & 0 \end{pmatrix}\Big)\begin{pmatrix} u_n \\ w_n
              \end{pmatrix} = \begin{pmatrix}  f  \\ 0 \end{pmatrix}.
\end{equation}
We will address the conditions of continuity on the junction points after having shown well-posedness for \eqref{e:1tbc}. We apply Theorem \ref{t:st} with $\mathcal{H}=L^2(0,1)^2$ and
\begin{equation}\label{e:set}
   M_n \coloneqq \begin{pmatrix}
                1_n & 0 \\ 0 & 1_n 
              \end{pmatrix},\quad N_n \coloneqq \begin{pmatrix}
                1-1_n & 0 \\ 0 & 1-1_n 
              \end{pmatrix},\quad A = - \begin{pmatrix} 0 & \partial_{x,0} \\ \partial_x  & 0 \end{pmatrix}.
\end{equation}
It is easy to see that for all $\nu>0$ we have $\nu M_n+\Re N_n\geq \min\{1,\nu\}$ and that $A=-A^*$. Hence, we get the following theorem:
\begin{theorem}\label{t:sts} For all $\mu>0$, the operator 
\[
   \overline{\mathcal{B}}_{\mu,n} \coloneqq  \overline{\Big(\partial_{t,\mu} \begin{pmatrix}
                1_n & 0 \\ 0 & 1_n 
              \end{pmatrix} + \begin{pmatrix}
                1-1_n & 0 \\ 0 & 1-1_n 
              \end{pmatrix} - \begin{pmatrix} 0 & \partial_{x,0} \\ \partial_x  & 0 \end{pmatrix}\Big)}
\]
is continuously invertible in $L_{\mu}^2(\mathbb{R};L^2(0,1)^2)$. 
\end{theorem}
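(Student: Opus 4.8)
The plan is to verify the hypotheses of Theorem~\ref{t:st} for the concrete choice of $A$, $M_n$, $N_n$ given in \eqref{e:set} and then read off the conclusion. The theorem requires three things: that $A$ is skew-selfadjoint, that $M_n = M_n^* \geq 0$ and $N_n \in L(\mathcal{H})$, and that the positive-definiteness estimate \eqref{e:st} holds uniformly for $\mu \geq \nu$ with some $c, \nu > 0$. Since the paper has already asserted $A = -A^*$ and the bound $\nu M_n + \Re N_n \geq \min\{1,\nu\}$, the work reduces to making these two claims precise.

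\textbf{Skew-selfadjointness of $A$.} First I would argue that $A = -A^*$. The operator $\partial_{x,0}$ is defined on $H_0^1(0,1)$ with homogeneous Dirichlet conditions, and $\partial_x \coloneqq -\partial_{x,0}^*$ is its maximal (Neumann-type) counterpart with no boundary conditions. The off-diagonal block operator $\begin{pmatrix} 0 & \partial_{x,0} \\ \partial_x & 0 \end{pmatrix}$ is then skew-selfadjoint by the standard computation: integration by parts produces the boundary term $[u\,\overline{w}]_0^1$, which vanishes precisely because one component lies in $H_0^1$ while the other is unrestricted, so the domains match up correctly under adjunction. Concretely, with $A$ as in \eqref{e:set}, one checks $A^* = -A$ by verifying that the adjoint of the block matrix swaps $\partial_{x,0}$ and $-\partial_x = \partial_{x,0}^*$ back into place. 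This is the routine but essential structural point that the Dirichlet/Neumann pairing at the endpoints $0,1$ encodes the homogeneous Neumann condition on $u_n$ from the third line of \eqref{e:2t}.

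\textbf{The positivity estimate.} Next I would verify \eqref{e:st}. Since $1_n$ is a multiplication operator taking values in $\{0,1\}$, it is selfadjoint with $0 \leq 1_n \leq 1$, so $M_n = M_n^* \geq 0$ and $N_n = 1 - M_n \geq 0$ is bounded with $\Re N_n = N_n$. For $\phi = (u,w) \in \mathcal{H}$ we compute
\begin{equation*}
   \mu \langle M_n \phi, \phi\rangle + \Re\langle N_n \phi,\phi\rangle
   = \mu\langle 1_n \phi,\phi\rangle + \langle (1-1_n)\phi,\phi\rangle,
\end{equation*}
and since $1_n$ acts diagonally and pointwise as either $0$ or $1$, the integrand is $\mu$ where $1_n = 1$ and $1$ where $1_n = 0$; in either case it is bounded below by $\min\{1,\mu\}\,|\phi|^2$ pointwise, whence the quadratic form is $\geq \min\{1,\mu\}\langle\phi,\phi\rangle$. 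Taking $\nu = 1$ gives $c = 1$ for all $\mu \geq 1$, or more sharply $c = \min\{1,\nu\}$ works for each fixed $\nu > 0$, confirming \eqref{e:st}.

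With both hypotheses verified, Theorem~\ref{t:st} applies directly and yields that $\mathcal{B}_{\mu,n} = \partial_{t,\mu}\mathcal{M}_n + \mathcal{N}_n + \mathcal{A}$ is closable and that $\overline{\mathcal{B}}_{\mu,n}^{-1} = \mathcal{S}_{\mu,n}$ is a well-defined bounded operator with $\|\mathcal{S}_{\mu,n}\|_{L(L_\mu^2)} \leq 1/c$, which is exactly continuous invertibility in $L_\mu^2(\mathbb{R};L^2(0,1)^2)$. I expect the main obstacle to be not the positivity estimate (which is essentially immediate from the pointwise $\{0,1\}$-structure of $1_n$) but rather the rigorous justification of skew-selfadjointness, i.e.\ correctly identifying the adjoint domains so that the boundary contributions cancel; this is where the asymmetric choice of $\partial_{x,0}$ (Dirichlet) versus $\partial_x = -\partial_{x,0}^*$ (Neumann) must be handled carefully to ensure $A = -A^*$ rather than merely $A \subseteq -A^*$.
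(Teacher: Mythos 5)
Your proposal is correct and follows exactly the paper's route: the paper proves Theorem~\ref{t:sts} by noting $\nu M_n+\Re N_n\geq \min\{1,\nu\}$ and $A=-A^*$ for the choices in \eqref{e:set} and then invoking Theorem~\ref{t:st}. You merely spell out the two verifications (the adjoint computation showing $A^*=-A$ via the Dirichlet/Neumann pairing $\partial_x=-\partial_{x,0}^*$, and the pointwise $\{0,1\}$-structure of $1_n$ giving the estimate) that the paper leaves as ``easy to see.''
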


\begin{remark}\label{r:jc} By Remark \ref{r:reg}, we infer that if $f$ is weakly differentiable with respect to time, then so is $u_n$. Moreover, $(u_n,w_n)\in D(A)$. Hence, in particular, we obtain that $w_n\in D(\partial_{x,0})$ and $u_n\in D(\partial_{x})$. By Sobolev's embedding theorem, we deduce that both $u_n$ and $w_n$ are continuous with respect to the spatial variables. In particular, $u_n$ is $t$-almost everywhere continuous on the junction points.  
\end{remark}

In order to let $n\to\infty$ in \eqref{e:1tbc}, we recall the following well-known observation.
\begin{theorem}[{{see e.g.~\cite[Theorem 2.6]{CioDon}}}]\label{t:pm} Let $a\colon \mathbb{R}\to \mathbb{C}$ bounded, measurable and $1$-periodic. Then $ a(n\cdot)\to\int_{0}^1a(x)d x$ in the weak* topology of $L^\infty(\R)$ as $n\to\infty$. 
\end{theorem}

Next, for applying Theorem \ref{t:ht}, it, thus, suffices to observe that weak* convergence in $L^\infty(\mathbb{R})$ is the same as convergence of the associated multiplication operators in $L^2(\mathbb{R})$ and that both the spaces $D(\partial_x)=H^1(0,1)$ and $D(\partial_{x,0})=H_0^1(0,1)$ are compactly embedded into $L^2(0,1)$. Hence, $D(A)$ is compactly embedded in $L^2(0,1)^2$. Therefore, we obtain with the help of Theorem \ref{t:ht} applied with the settings as in \eqref{e:set}:
\begin{theorem}\label{t:hs} For every $\mu>0$ we have with  $\overline{\mathcal{B}}_{\mu,n}$ from Theorem \ref{t:sts} that
\[
   \overline{\mathcal{B}}_{\mu,n}^{-1} \eqqcolon \mathcal{S}_{\mu,n} \to \overline{\Big(\partial_{t,\mu} \begin{pmatrix}
                1/2 & 0 \\ 0 & 1/2 
              \end{pmatrix} + \begin{pmatrix}
                1/2 & 0 \\ 0 & 1/2 
              \end{pmatrix} - \begin{pmatrix} 0 & \partial_{x,0} \\ \partial_x  & 0 \end{pmatrix}\Big)}^{-1}\eqqcolon \mathcal{T}_\mu
\]in the weak operator topology of $L(L_{\mu}^2(\mathbb{R};L^2(0,1)^2))$.
\end{theorem}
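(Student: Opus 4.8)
The plan is to apply the homogenization result, Theorem~\ref{t:ht}, directly with the settings already fixed in \eqref{e:set}. The strategy reduces the statement to verifying the three hypotheses of that theorem for the sequences $(M_n)_n$, $(N_n)_n$ and the fixed operator $A$, and then identifying the weak operator limits $O$ and $P$ explicitly. Since the conclusion of Theorem~\ref{t:ht} is precisely the $\tau_w$-convergence $\mathcal{S}_{\mu,n}\to\overline{\partial_{t,\mu}\mathcal{O}+\mathcal{P}+\mathcal{A}}^{-1}$, once the hypotheses are checked and $O,P$ computed, the claim follows with no further work.

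First I would verify the structural hypothesis on $A$. By construction $A=-A^*$ (the skew-selfadjointness was already noted after \eqref{e:set}), and $D(A)=D(\partial_x)\times D(\partial_{x,0})=H^1(0,1)\times H_0^1(0,1)$. By the Rellich--Kondrachov theorem both $H^1(0,1)$ and $H_0^1(0,1)$ embed compactly into $L^2(0,1)$, so $D(A)$ with the graph norm embeds compactly into $L^2(0,1)^2$, exactly as required. Next I would verify the positive-definiteness condition \eqref{e:st}: the computation $\nu M_n+\Re N_n\geq\min\{1,\nu\}$, valid for all $n$ with constants independent of $n$, was already recorded before Theorem~\ref{t:sts}, so \eqref{e:st} holds uniformly in $n$ with $c=\min\{1,\nu\}$.

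The remaining hypothesis, and the genuine analytic content, is the weak operator convergence of $M_n$ and $N_n$. Here I would use the defining structure $M_n=\operatorname{diag}(1_n,1_n)$ where $1_n$ is multiplication by $a_n=a(n\cdot)$ with $a=\sum_k\chi_{[k,k+1/2]}$. The function $a$ is $1$-periodic, bounded and measurable with $\int_0^1 a = 1/2$, so Theorem~\ref{t:pm} gives $a_n\to 1/2$ in the weak* topology of $L^\infty(\R)$, hence (as noted in the text, weak* convergence in $L^\infty$ coincides with $\tau_w$-convergence of the induced multiplication operators on $L^2$) the operators $1_n\to\frac12$ in $\tau_w$ of $L(L^2(0,1))$. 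Since the off-diagonal entries of $M_n$ vanish and the diagonal entries coincide, passing to the $2\times 2$ block operator yields $M_n\to O:=\operatorname{diag}(1/2,1/2)$ in $\tau_w$ of $L(L^2(0,1)^2)$. For $N_n=\operatorname{diag}(1-1_n,1-1_n)$ the same reasoning gives $1-1_n\to 1/2$, whence $N_n\to P:=\operatorname{diag}(1/2,1/2)$ in $\tau_w$. (One must take care that the multiplication operators here act on $L^2(0,1)$ rather than $L^2(\R)$; this is harmless since restriction to $(0,1)$ preserves the weak* testing against $L^1(0,1)\subseteq L^1(\R)$ functions.)

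The main obstacle I anticipate is not in the convergence itself but in correctly matching the operator-theoretic framework: one must ensure that the weak* convergence of the symbols $a_n$ on all of $\R$ transfers to $\tau_w$-convergence of multiplication operators on the bounded interval $L^2(0,1)$, and that the block-diagonal structure is respected when passing to $\tau_w$ of $L(L^2(0,1)^2)$. Both are routine but deserve a line of justification. Having identified $O=P=\operatorname{diag}(1/2,1/2)$, Theorem~\ref{t:ht} applies and yields precisely
\[
   \mathcal{S}_{\mu,n}\to\overline{\Big(\partial_{t,\mu}\begin{pmatrix}1/2&0\\0&1/2\end{pmatrix}+\begin{pmatrix}1/2&0\\0&1/2\end{pmatrix}-\begin{pmatrix}0&\partial_{x,0}\\\partial_x&0\end{pmatrix}\Big)}^{-1}=\mathcal{T}_\mu
\]
in $\tau_w$ of $L(L_\mu^2(\R;L^2(0,1)^2))$, which is the assertion. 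The well-definedness of $\mathcal{T}_\mu$ as a bounded operator is guaranteed by the remark following Theorem~\ref{t:ht}, since \eqref{e:st} is stable under $\tau_w$-limits.
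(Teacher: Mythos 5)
Your proposal is correct and takes essentially the same route as the paper: the paper's proof likewise consists of invoking Theorem \ref{t:ht} with the settings of \eqref{e:set} (the skew-selfadjointness, compact embedding of $D(A)$, and uniform bound $\nu M_n+\Re N_n\geq\min\{1,\nu\}$ having been recorded in the surrounding text) and then observing via Theorem \ref{t:pm} that $1_n$ and $1-1_n$ converge to $1/2$ in the weak operator topology. Your extra care about transferring weak* convergence of the symbols on $\R$ to $\tau_w$-convergence of multiplication operators on $L^2(0,1)$ merely makes explicit what the paper leaves implicit.
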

\begin{proof}
 With the help of Theorem \ref{t:ht}, it suffices to observe that both $1_n$ and $1-1_n$ converge in the weak operator topology to $1/2$, by Theorem \ref{t:pm}.
\end{proof}

Next, it is an application of Theorem \ref{t:es} that $\mathcal{T}_\mu$ is exponentially stable:

\begin{theorem}\label{t:ess} For every $\mu>0$ we have that $\mathcal{T}_\mu$ from Theorem \ref{t:hs} is exponentially stable.
\end{theorem}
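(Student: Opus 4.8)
The plan is to verify the hypotheses of Theorem \ref{t:es} for the limit operator and then invoke that theorem verbatim. Reading off the coefficients from the definition of $\mathcal{T}_\mu$ in Theorem \ref{t:hs}, we are in the setting of the solution theory with $\mathcal{H} = L^2(0,1)^2$ and
\[
   M = \tfrac12\begin{pmatrix} 1 & 0 \\ 0 & 1\end{pmatrix},\qquad N = \tfrac12\begin{pmatrix} 1 & 0 \\ 0 & 1\end{pmatrix},\qquad A = -\begin{pmatrix} 0 & \partial_{x,0} \\ \partial_x & 0\end{pmatrix}.
\]
The skew-selfadjointness $A = -A^*$ was already recorded in Section 3, and $M = \tfrac12 I$ is manifestly selfadjoint and nonnegative.

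First I would confirm the positive-definiteness condition \eqref{e:st}. Since $M = N = \tfrac12 I$, for every $\phi\in\mathcal{H}$ and every $\mu > 0$ we have $\mu\langle M\phi,\phi\rangle + \Re\langle N\phi,\phi\rangle = \tfrac{\mu+1}{2}\langle\phi,\phi\rangle$, so \eqref{e:st} holds with, e.g., $c = \tfrac12$ for any $\nu > 0$. (Alternatively, \eqref{e:st} for the limit is inherited from the uniform estimate $\nu M_n + \Re N_n \geq \min\{1,\nu\}$ via the weak-operator stability of \eqref{e:st} noted after Theorem \ref{t:ht}.) I would then check the extra hypothesis of Theorem \ref{t:es}, namely the strict positivity of $2\Re N = N + N^*$: here $N = \tfrac12 I$ is selfadjoint, so $N + N^* = I \geq 1$ in the sense of positive definiteness, and one may take $c' = 1$. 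With all hypotheses of Theorem \ref{t:es} in force, the theorem delivers the exponential stability of $\mathcal{T}_\mu$.

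There is no real technical obstacle; the entire content sits in the strict positivity of $N$ in the homogenized limit. For each fixed $n$ the coefficient $N_n = \mathrm{diag}(1 - 1_n,\, 1 - 1_n)$ is only \emph{nonnegative} --- it vanishes identically on the hyperbolic subintervals --- so Theorem \ref{t:es} does not apply to any individual $\mathcal{S}_{\mu,n}$, consistent with the fact that the purely hyperbolic equation on $[0,1]$ fails to be exponentially stable. It is exactly the weak-operator averaging of Theorem \ref{t:pm}, which turns the oscillating projection $1 - 1_n$ into the strictly positive constant $\tfrac12$, that upgrades the merely nonnegative damping to a uniformly positive one; this is the superposition effect announced in the introduction and the one point worth spelling out in the proof.
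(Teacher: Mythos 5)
Your proof is correct and is essentially the paper's own argument: the paper likewise disposes of the theorem by noting that $\Re\begin{pmatrix} 1/2 & 0 \\ 0 & 1/2\end{pmatrix}\geq 1/2>0$ and invoking Theorem \ref{t:es}. Your additional verification of \eqref{e:st} and your closing remark on why strict positivity only appears in the homogenized limit are accurate elaborations, not a different route.
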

\begin{proof}
 The assertion follows by observing that $\Re \begin{pmatrix}
                1/2 & 0 \\ 0 & 1/2 
              \end{pmatrix}\geq 1/2>0$ and by applying Theorem \ref{t:es}.
\end{proof}

We conclude with observing that for $(u,w)$ with the property $\mathcal{T}_\mu (f,0) = (u,w)$ we obtain for smooth $f$ (see also Remark \ref{r:reg})
\[
   \Big(\partial_{t} \begin{pmatrix}
                1/2 & 0 \\ 0 & 1/2 
              \end{pmatrix} + \begin{pmatrix}
                1/2 & 0 \\ 0 & 1/2 
              \end{pmatrix} - \begin{pmatrix} 0 & \partial_{x,0} \\ \partial_x  & 0 \end{pmatrix}\Big) \begin{pmatrix} u \\ w\end{pmatrix} = \begin{pmatrix} f \\ 0\end{pmatrix}.
\]
Reading off the second line, we get
\[
  \partial_t w + w - 2\partial_{x} u = 0\text{ or } w = (1+\partial_t)^{-1}2\partial_{x} u.
\]
Thus, the first line reads
\[
   (\partial_t  + 1)u - 2\partial_{x,0} w = (\partial_t  + 1)u - 2\partial_{x,0} (1+\partial_t)^{-1}2\partial_{x} u= 2f.
\]
So,
\[
   \partial_t^2 u + 2\partial_t u + u - 4\partial_{x,0}\partial_{x} u= 2f+2\partial_t f,
\]
which establishes \eqref{e:hh}.

\section{The Hyperbolic-Parabolic System}

In the concluding section, we consider
\begin{equation}\label{e:2p}
   \begin{cases}
        \partial_t^2 u_n(t,x) - \partial_x^2 u_n(t,x) = \partial_t f(t,x),&  x\in \bigcup_{j\in \{1,\ldots,n\}} \big(\frac{j-1}{n},\frac{2j-1}{2n}\big)\\
        \partial_t u_n(t,x) - \partial_x^2 u_n(t,x) = f(t,x),& x\in \bigcup_{j\in \{1,\ldots,n\}} \big(\frac{2j-1}{2n},\frac{j}{n}\big)\\
        \partial_x u_n(t,0)=\partial_x u_n(t,1)=0, & 
   \end{cases}
   \quad(t\in \mathbb{R}),
\end{equation} subject to homogeneous initial conditions and conditions for continuity at the junction points $\{(2j-1)/2n; j\in \{1,\ldots,n\}\}$ for $u_n$. 
As the arguments are similar (if not entirely the same) to the case treated in the previous section, we will not give the details here. 
Rewritten as a $(2\times2)$-block operator matrix system, equation \eqref{e:2p} reads
\begin{equation}\label{e:22p}
   \Big(\partial_t \begin{pmatrix}
                1 & 0 \\ 0 & 1_n 
              \end{pmatrix}+\begin{pmatrix}
                0 & 0 \\ 0 & 1-1_n 
              \end{pmatrix} - \begin{pmatrix} 0 & \partial_{x,0} \\ \partial_x & 0 \end{pmatrix}\Big)\begin{pmatrix} u_n \\ w_n
              \end{pmatrix} = \begin{pmatrix}  f  \\ 0 \end{pmatrix}
\end{equation} as an equation on $\mathbb{R}\times (0,1)$. 
So, $((u_n,w_n))_n$ converges weakly to the solution $(u,w)$ of
\begin{equation}\label{e:22h}
   \Big(\partial_t \begin{pmatrix}
                1 & 0 \\ 0 & 1/2 
              \end{pmatrix}+\begin{pmatrix}
                0 & 0 \\ 0 & 1/2 
              \end{pmatrix} - \begin{pmatrix} 0 & \partial_{x,0} \\ \partial_x & 0 \end{pmatrix}\Big)\begin{pmatrix} u \\ w
              \end{pmatrix} = \begin{pmatrix}  f  \\ 0 \end{pmatrix}
\end{equation}
Thus, written as a second order system, we get the following equation for $u$:
\[
  \partial_t^2 u + \partial_t u - 2\partial_{x,0}\partial_x u = f +\partial_t f.
\]
Let $f(t,x)=\phi(t)\chi_{[0,1]}(x)$ for some non-negative, compactly supported, smooth function $\phi\neq 0$. Then, with the ansatz $u(t,x)=\psi(t)\chi_{[0,1]}(x)$, we arrive at
\[
  \partial_t^2 \psi + \partial_t \psi = \phi +\partial_t \phi.
\]
So,
\[
  \partial_t \psi = \phi.
\]
Thus, as $\phi$ is positive and compactly supported, $\psi(t)=\int_{-\infty}^t \phi(\tau)d\tau$ is eventually constant. Hence, the limit equation is \emph{not} exponentially stable in the sense of \cite[Definition 3.1]{Trostorff2013a} (see also \cite[Section 3.1]{Trostorff2015a}).

\noindent
Marcus Waurick \\
University of Bath \\
Department of Mathematical Sciences \\
BA2 7AY Bath, England, UK \\
{\tt m.wau\rlap{\textcolor{white}{hugo@egon}}rick@bath\rlap{\textcolor{white}{darmstadt}}.ac.uk}


\begin{thebibliography}{99}

\bibitem{CioDon}
D.~Cioranescu and P.~Donato.
\newblock {\em {An Introduction to Homogenization}}.
\newblock Oxford University Press, New York, 2010.

\bibitem{Kalauch}
A.~Kalauch, R.~Picard, S.~Siegmund, S.~Trostorff, and M.~Waurick.
\newblock {A Hilbert Space Perspective on Ordinary Differential Equations with
  Memory Term}.
\newblock {\em Journal of Dynamics and Differential Equations.},
  26(2):369--399, 2013.
\newblock 2014.

\bibitem{PicPhy}
R.~Picard.
\newblock A structural observation for linear material laws in classical
  mathematical physics.
\newblock {\em Mathematical Methods in the Applied Sciences}, 32:1768--1803,
  2009.
  
\bibitem{Picard}
R.~Picard and D.~McGhee.
\newblock {\em {Partial Differential Equations: A unified Hilbert Space
  Approach}}, volume~55 of {\em Expositions in Mathematics.}
\newblock DeGruyter, Berlin, 2011.

\bibitem{Trostorff2013a}
S.~Trostorff.
\newblock Exponential stability for linear evolutionary equations.
\newblock {\em Asymptotic Anal.}, 85:179--197, 2013.

\bibitem{Trostorff2014PAMM}
S.~Trostorff.
\newblock A note on exponential stability for evolutionary equations.
\newblock In {\em PAMM}, pages 983--984, 2014.

\bibitem{Trostorff2015a}
S.~Trostorff.
\newblock Exponential stability for second order evolutionary problems.
\newblock {\em J. Math. Anal. Appl.}, 429(2):1007--1032, 2015.

\bibitem{Waurick2013AA_Hom}
M.~Waurick.
\newblock Homogenization of a class of linear partial differential equations.
\newblock {\em Asymptotic Analysis}, 82:271--294, 2013.

\bibitem{Waurick2014SIAM_HomFrac}
M.~Waurick.
\newblock Homogenization in fractional elasticity.
\newblock {\em SIAM Journal on Mathematical Analysis}, 46(2):1551--1576, 2014.
\newblock 2014.

\end{thebibliography}
\end{document}